\theoremstyle{plain}
\newtheorem{dummy}{anything}[section]
\newtheorem{theorem}[dummy]{Theorem}
\newtheorem{lemma}[dummy]{Lemma}
\newtheorem{proposition}[dummy]{Proposition}
\newtheorem{corollary}[dummy]{Corollary}
\theoremstyle{definition}
\newtheorem{definition}[dummy]{Definition}
\newtheorem{remark}[dummy]{Remark}
\theoremstyle{remark}
\newcommand{\Z}{\mathbb{Z}}
\newcommand{\M}{\mathcal{M}}
\newcommand{\A}{\mathcal{A}}
\def\a{\alpha}
\def\b{\beta}
\def\d{\delta}
\def\t{\mu}
\begin{document}

\title {On open books for nonorientable $3$-manifolds}

\author{Burak Ozbagci}

\address{Department of Mathematics, Ko\c{c} University, Istanbul,
Turkey}
\email{bozbagci@ku.edu.tr}

\subjclass[2000]{}
\thanks{}

%\date{\today}

\begin{abstract}
We show that the monodromy of Klassen's genus two open book for $P^2 \times S^1$ is the $Y$-homeomorphism of Lickorish, which is also known as the crosscap slide. Similarly, we  show that $S^2 \widetilde{\times} S^1$ admits a genus two open book whose monodromy is the crosscap transposition.  Moreover, we show that each of  $P^2 \times S^1$  and  $S^2 \widetilde{\times} S^1$ admits infinitely many isomorphic genus two open books whose monodromies are mutually nonisotopic.  Furthermore, we include a simple observation about the stable equivalence classes of open books for  $P^2 \times S^1$  and  $S^2 \widetilde{\times} S^1$. Finally, we formulate a version of Stallings' theorem about the Murasugi sum of open books,  without imposing any orientability assumption on the pages. \end{abstract} 

%the product of the real  projective plane with the circle, 
%Finally, we provide a method to calculate the fundamental group of the total space of any given open book whose pages are nonorientable surfaces. 

\maketitle

\section{Introduction}\label{sec: intro}

It is a classical theorem of Alexander \cite{a} that there is an open
book for any closed orientable $3$-manifold, which can be obtained by pulling back the standard open book for $S^3$ via some branched
covering, where the branch set is braided about the binding.  In the nonorientable case,  an analogous result was obtained by Berstein and Edmonds \cite{be}, who first proved that every closed nonorientable $3$-manifold is a branched cover of $P^2 \times S^1$ and then showed as a corollary that an open book for $P^2 \times S^1$ can be pulled back to the cover,  where $P^2$ denotes the real projective plane.  In their work,  however, Berstein and Edmonds used the Stallings' fibration theorem \cite{s} to show the existence of a genus two open book with connected binding (meaning that its page is  a Klein bottle with one hole)  for $P^2 \times S^1$,  rather than describing the open book explicitly. They also mentioned \cite[Remark 9.2]{be} that the monodromy of their genus two open book must be the $Y$-homeomorphism of Lickorish \cite{l}, without a detailed proof.

%where the binding of this open book winds twice around the $S^1$ factor and the page is a Klein bottle with one hole.

%We will refer to an open book whose page is nonorientable as a nonorientable open book. 

In a two-page note, Klassen  \cite{k} described an explicit genus two open book with connected binding for $P^2 \times S^1$,   {\em without discussing its monodromy}. In this paper, we prove that the monodromy of Klassen's genus two open book for $P^2 \times S^1$ is the $Y$-homeomorphism---which is a primary example of a surface homeomorphism that cannot be factorized into Dehn twists.  We also show that the nonorientable $S^2$-bundle over $S^1$, denoted by $S^2 \widetilde{\times} S^1$, admits a genus two open book with connected binding, whose monodromy is the crosscap transposition---another example of a surface homeomorphism that cannot be factorized into Dehn twists. As a matter of fact,  we show that each of  $P^2 \times S^1$  and  $S^2 \widetilde{\times} S^1$ admits infinitely many isomorphic genus two open books with connected bindings, whose monodromies are mutually nonisotopic.

The reader should compare our result with a recent result of Ghanwat, Pandit and Selvakumar \cite{gps},  who proved that every closed nonorientable $3$-manifold admits a genus one open book (meaning that its page is a projective plane with holes), which is analogous to the fact that every closed orientable $3$-manifold admits a planar open book (cf. \cite{r}, see also \cite{o}). A notable common feature of the planar open books constructed in \cite{o} for the orientable $3$-manifolds and the genus one open books constructed in  \cite{gps} for the nonorientable $3$-manifolds is that the monodromy of each one of them is a product of Dehn twists along two-sided curves on the page. 

Giroux showed that on a closed orientable $3$-manifold,  the coorientable contact structures up to isotopy are in one-to-one correspondence with the open books up to positive stabilization. In contrast,  there is no contact structure on a {\em nonorientable} $3$-manifold even if one drops the usual coorientability assumption on the contact structures.   Nevertheless, one can still consider an equivalence relation on the set all open books on a closed nonorientable $3$-manifold, induced by stabilizations. According to \cite{gps},  $P^2 \times S^1$ admits a genus one  open book whose monodromy is a product of Dehn twists along two-sided curves. On the other hand, as we show here, the monodromy of Klassen's genus two open book  for  $P^2 \times S^1$ is  the $Y$-homeomorphism. As a consequence, we conclude that  these two open books for  $P^2 \times S^1$ cannot be in the same equivalence class.  By a similar argument we show that the genus two open book with connected binding for $S^2 \widetilde{\times} S^1$ whose monodromy is the crosscap transposition is not stably equivalent to the standard genus one open book with connected binding, whose monodromy  is the identity map.

Using algebraic methods, Stallings \cite{s2} proved  that the result of a Murasugi sum (a.k.a. plumbing) of  the {\em oriented} pages of two open books is the oriented page of another open book whose total space is the connected sum of total spaces of those open books. Later, Gabai \cite{g} gave a geometric proof of Stallings' theorem. Here, we formulate a version of Stallings' theorem without imposing the orientability of the  pages of the open books so that Gabai's proof extends to this version. 

We also provide a method to find a presentation of the fundamental group of the total space of any given nonorientable open book (in dimension three) based on the fundamental group of its page and its monodromy, similar to the one given in \cite{eo} for the orientable case. 

%Note that the pages of an open book for a closed $3$-manifold are orientable if and only if the manifold itself is  orientable 

\section{$Y$-homeomorphism of Lickorish}\label{sec: yhomeo}
Throughout the paper, we denote by $K$ the {\em Klein bottle with one hole}, i.e., a nonorientable genus two surface with one boundary component. A model for $K$ is given in Figure~\ref{fig: kleinbottle}, where the edges of the octagon are identified as indicated by the arrows and the union of the red arcs is the boundary  $\partial K$ after the identifications.

\begin{figure}[h]
\centering
\includegraphics{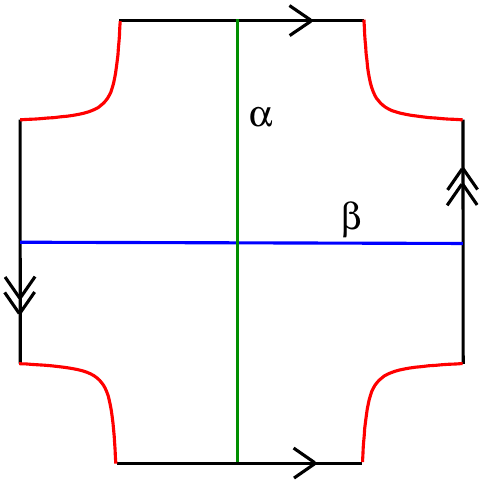}
  \caption{Klein bottle with one hole}
                        \label{fig: kleinbottle}
\end{figure}

Let $r$ denote the self-homeomorphism of $K$ induced by the reflection of this octagon through the circle $\a$. Since $r$ does not fix $\partial K$ pointwise, it is not an element of the mapping class group $Map(K)$ of $K$. However, the homeomorphism $y: K \to K $ obtained by post-composing $r$ with a half twist around $\partial K$ fixes $\partial K$ pointwise and hence it is an element 
of $Map(K)$, which is a $Y$-homeomorphism of Lickorish \cite{l}. The two possible half twists around $\partial K$ induce two $Y$-homeomorphisms of $K$ which are inverses of each other.   By definition,  $r^2$ is the identity, but this is no longer true for $y$. It can be easily verified that $ y^2$ is isotopic to a Dehn twist around  $\partial K$.

In fact, Lickorish defined a $Y$-homeomorphism on any nonorientable surface $N$ of genus at least two as follows.  Suppose that $\a$ is a two-sided and $\b$ is a one sided simple closed curve on $N$  intersecting transversely at one point.  Then a regular neighborhood of $\a \cup \b$ is homeomorphic to $K$. A $Y$-homeomorphism of $N$ is obtained simply by extending the homeomorphism $y$ to the surface $N$ by the identity map. A  $Y$-homeomorphism is also called a {\em crosscap slide} in the literature \cite{ko}, especially when the nonorientable surface at hand is described by crosscaps.  Note that   a  $Y$-homeomorphism on $N$ cannot be expressed as a product of Dehn twists along two-sided curves \cite{l}. 

\section{Klassen's genus two open book} \label{sec: cotlef}

In this section, we describe Klassen's  genus two open book for $P^2 \times S^1$ (cf. \cite{k}) and prove that its monodromy is the $Y$-homeomorphism of Lickorish. We view the real projective plane $P^2$ as the unit disk $D^2$ in the complex plane  $\mathbb{C}$ whose boundary is identified with itself by the antipodal map.  Note that the homeomorphism $\varphi' : D^2 \to  D^2 $ given by $\varphi' (z) = -z$ descends to a homeomorphism $\varphi : P^2 \to  P^2 $, which is isotopic to the identity. As a matter of fact, any self-homeomorphism of $P^2$ is isotopic to the identity \cite{e}. Let $I$ denote the unit interval $[0,1]$.  It follows that the quotient $P^2 \times I / \big((x, 1) \sim (\varphi(x), 0)\big)$, is homeomorphic to $P^2 \times S^1$.

Next, we consider the map $p: \big( D^2 - \{-\frac{1}{2}, \frac{1}{2}\}\big) \times I \to S^1$ which is given by $$p(z,t) = \dfrac{z^2 -\frac{1}{4}}{|z^2 -\frac{1}{4}|} e^{2\pi i t}.$$ Note that $\big(\{-\frac{1}{2}, \frac{1}{2}\} \times I\big)/\sim$ is a knot in $P^2 \times S^1$, which we denote by $B$.  The map $p$ induces a fibration of the complement of $B$  in $P^2 \times S^1$ over the unit circle in the complex plane.  This construction yields an open book for $P^2 \times S^1$ whose binding $B$ winds twice around the $S^1$ factor.

Now we consider the page $F_0 \subset P^2 \times S^1$ which is defined as the union of the binding $B$ and the fiber $p^{-1}(e^{2\pi i 0})$ under the identifications described above. We claim that $F_0$  is a Klein bottle with one hole, whose boundary is $B$. By definition, the interior of $F_0$  is the solution set of the equation $p(z,t)=1$, or equivalently, the equation 
\begin{equation}\label{eq: map}
(z^2 -\frac{1}{4}) e^{2\pi i t} =|z^2 -\frac{1}{4}|. \end{equation}  To visualize this solution set, which is a surface,  we compute its cross sections for each $ t \in I$ in the cylinder  $\big( D^2 - \{-\frac{1}{2}, \frac{1}{2}\}\big) \times I$, keeping in mind that the points on $\partial D^2$ are identified by the antipodal map.  Since the right-hand side of  Equation~\ref{eq: map} is real and positive, by plugging in $z=x+iy$, we conclude that \begin{equation} \label{eq: zero} Im [(x^2 +2xyi -y^2 -\frac{1}{4})\big(\cos (2 \pi t) + i \sin (2\pi t)\big)] =0 \end{equation} and 
\begin{equation} \label{eq: pos} Re [(x^2 +2xyi -y^2 -\frac{1}{4})\big(\cos (2 \pi t) + i \sin (2\pi t)\big)] > 0 \end{equation} which, in turn, imply that   \begin{equation}\label{eq: real}  (x^2 -y^2 -\frac{1}{4})\sin (2\pi t) + 2xy \cos (2 \pi t) =0.\end{equation} and  \begin{equation} \label{eq: part} xy <0 \; \mbox{if} \; t \in  (0,  \frac{1}{2})  \; \mbox{and} \; xy >0 \; \mbox{if} \; t \in  (\frac{1}{2}, 1). \end{equation}

\begin{figure}[h]
\centering
\includegraphics[scale=0.3]{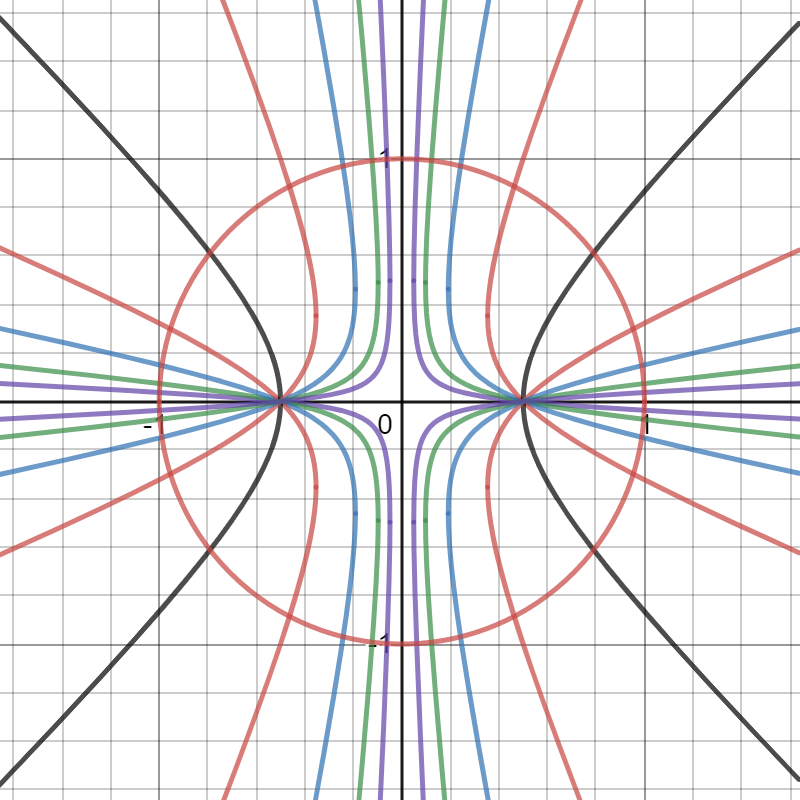}
  \caption{The hyperbolas $ (x^2 -y^2 -\frac{1}{4})\sin (2\pi t) + 2xy \cos (2 \pi t) =0$ for $t \in (0,1)$ and $t \neq \frac{1}{2}$. }
                        \label{fig: hyperbolas}
\end{figure}

We observe that, for each $  t \in (0,  \frac{1}{2}) \cup (\frac{1}{2}, 1)$, the solution of  Equation~\ref{eq: real} is a hyperbola whose axis is rotated by the angle $(\frac{1}{4} - t)\pi$ as depicted in Figure~\ref{fig: hyperbolas}. Note that each of these hyperbolas pass through the points $\pm \frac{1}{2}$ in the plane.   Therefore,  by Inequality ~\ref{eq: part},  the cross section at any $t \in  (0,  \frac{1}{2})$ is the part of the corresponding hyperbola that belongs to the region $xy <0$  in the plane, which becomes the interior of an arc connecting two points in $B$ via the identification of $\partial D^2$ under the antipodal map. Similarly,  the cross section at  $t \in  (\frac{1}{2}, 1)$  is the part of the corresponding hyperbola that belongs to the region $xy >0$ in the plane, which becomes the interior of an arc connecting two points in $B$ via the identification of $\partial D^2$ under the antipodal map.

When $t=0$, Equation~\ref{eq: zero} and Inequality~\ref{eq: pos} imply  that the cross section is the union $(-1, -\frac{1}{2}) \cup (\frac{1}{2}, 1)$ on the real line in $D^2 \subset \mathbb{C}$ as depicted on the left in Figure~\ref{fig: twolevel}, which becomes the interior of an arc connecting two points in $B$ via the identification of $\partial D^2$ under the antipodal map.  The cross section  for $t=1$ is the same as the cross section for $t=0$ and these arcs are identified in $P^2 \times S^1$ under the map $\varphi$ via reversing the orientation. 

\begin{figure}[h]
\centering
\includegraphics[scale=.7]{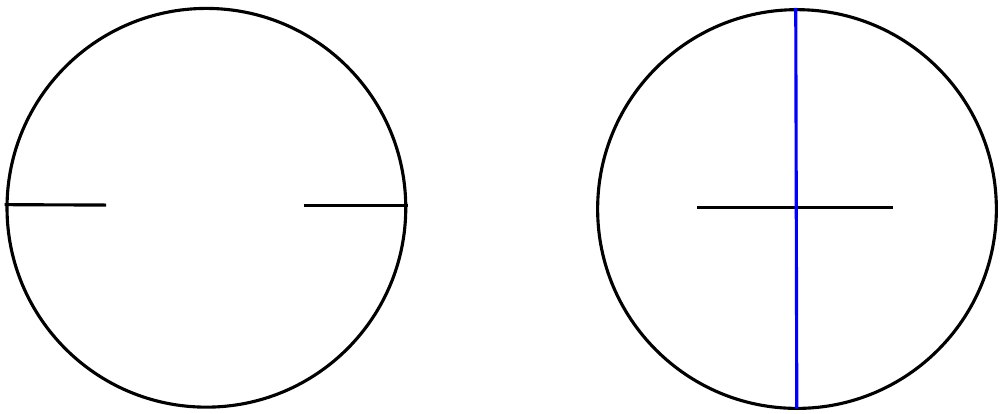}
  \caption{On the left: Cross section at $t=0$, and on the right: Cross section at $t =\frac{1}{2}$}
       \label{fig: twolevel}
\end{figure}

When $t=\frac{1}{2}$, Equation~\ref{eq: zero} and Inequality~\ref{eq: pos} imply that the cross section  is the union of the arc $(-i,i)$ on the imaginary axis and the arc $(-\frac{1}{2}, \frac{1}{2})$ on the real axis  as depicted on the right in Figure~\ref{fig: twolevel}, which becomes the union of a circle and the interior of an arc connecting two points in $B$ via the identification of $\partial D^2$ under the antipodal map.

\begin{figure}[h]
\centering
\includegraphics{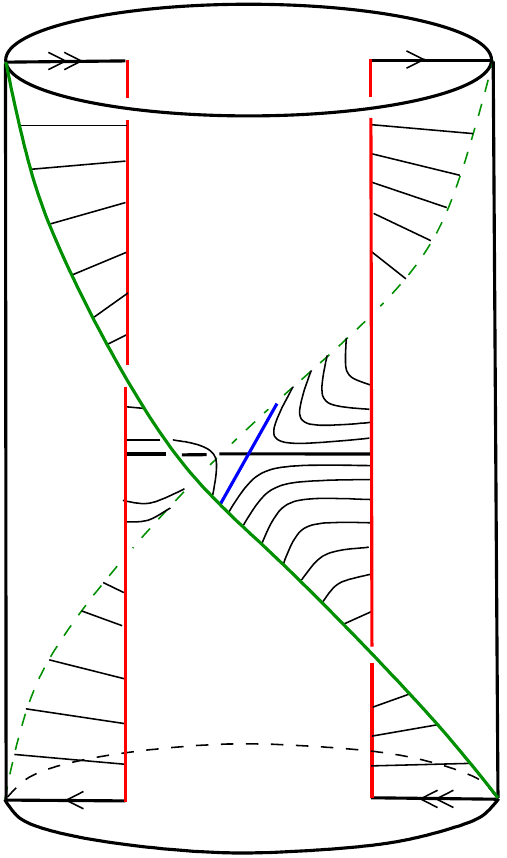}
  \caption{The page $F_0$  is depicted in $P^2 \times S^1$. Here each horizontal disk is a copy of the unit disk in $\mathbb{C}$ that  represents a $P^2$. The top ($t =1$) and the bottom ($t=0$) $P^2$ are identified by the homeomorphism $\varphi$ to obtain  $P^2 \times S^1$. There is a saddle of the surface $F_0$ at the level $t=\frac{1}{2}$. The union of the red arcs is the binding $B =\partial F_0 \subset P^2 \times S^1$. }
       \label{fig: cylinder}
\end{figure}

To summarize, for each $t \in [0, \frac{1}{2}) \cup (\frac{1}{2}, 1]$ the cross section is an arc connecting two points on the binding $B$ and for $t=\frac{1}{2}$ the cross section is the union of a circle and an arc  connecting two points in $B$. The cross section at the critical level $t=\frac{1}{2}$ can be viewed as the limit of the hyperbolas depicted in Figure~\ref{fig: hyperbolas} as $ t \to \frac{1}{2}$. The cross section for $t=0$ (which is the same as $t=1$) can also be seen in Figure~\ref{fig: hyperbolas} as the limit of the hyperbolas as $t \to 0$ (or equivalently as $t \to 1$). We claim that the page $F_0$, which is the union of the binding $B$ and all the cross sections for $t \in [0,1]$ under the identification $\varphi$ is homeomorphic to $K$, the Klein bottle with one hole, whose boundary is $B$. To see that $F_0$ depicted in Figure~\ref{fig: cylinder} is homeomorphic to $K$, we simply redraw it as an octagon on the plane with appropriate identification of its edges as shown in Figure~\ref{fig: identify}.

\begin{figure}[h]
\centering
\includegraphics[scale=1]{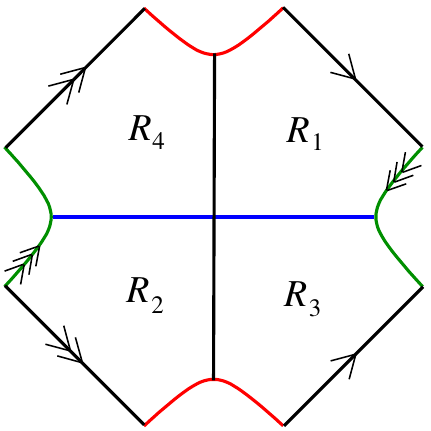}
  \caption{After identifying the edges as indicated,  the result is homeomorphic to $K$. The union of the red arcs  gives $\partial K$. }
       \label{fig: identify}
\end{figure}

 Next, we observe that, for any $s \in (0,1)$,  the page $F_s = B \cup  p^{-1} (e^{2\pi i s})$ of this open book is obtained by a vertical translation of the page $F_0$ depicted in Figure~\ref{fig: cylinder}. This follows from the fact that the interior of the page $F_s$ is the solution set of the equation $$P(z,t) = \frac{z^2 - \frac{1}{4}}{|z^2- \frac{1}{4}|} e^{2\pi i t} = e^{2 \pi i s} $$ which is equivalent to $$  \frac{z^2 - \frac{1}{4}}{|z^2- \frac{1}{4}|} e^{2\pi i (t-s)} = 1.$$ Therefore, the cross section of the page $F_s$ at the level $t$ is  exactly the cross section of $F_0$ 
at the level $t-s$.

\begin{theorem}\label{thm: main} The monodromy of  Klassen's  open book for $P^2 \times S^1$, whose page is a Klein bottle with one hole, is given by the $Y$-homeomorphism of Lickorish.  \end{theorem}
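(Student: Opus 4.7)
The plan is to compute the monodromy directly via the vertical vector field, using the already-established observation at the end of the section that the page $F_s$ is the vertical translate of $F_0$ by $s$ in the $t$-direction.

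I would first observe that the vector field $\partial_t$ on the cylinder $(D^2\setminus\{\pm\tfrac12\})\times I$ descends to a vector field on $P^2\times S^1\setminus B$ that is positively transverse to the pages. Indeed, because $p(z,t+s)=p(z,t)\,e^{2\pi is}$, flowing $\partial_t$ for time $s$ carries $F_0$ onto $F_s$, and the associated first-return map takes $(z_0,t_0)\in F_0$ to the point of $F_0$ obtained by flowing vertically by one unit, i.e.\ to $(z_0,t_0+1)$ in the cylinder. Under the identification $(z,1)\sim(\varphi(z),0)=(-z,0)$, this is the point $(-z_0,t_0)$. Hence the raw first-return map is the involution $h_0\colon F_0\to F_0$ given by $(z,t)\mapsto(-z,t)$; this indeed preserves $F_0$ because the defining equation depends only on $z^2$.

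Next, I would identify $h_0$ with the reflection $r$ of Section~\ref{sec: yhomeo}. Each cross section of $F_0$ described in the section---the hyperbolas in Figure~\ref{fig: hyperbolas}, the cross section at $t=0$, and the saddle cross section at $t=\tfrac12$ in Figure~\ref{fig: twolevel}---is invariant under $z\mapsto -z$ and is acted upon by it as an involution. By matching the cylinder picture of Figure~\ref{fig: cylinder} with the octagon description of $K$ in Figure~\ref{fig: identify}, I would verify that the fixed locus of $h_0$ corresponds, up to isotopy, to the two-sided simple closed curve $\a$, and that $h_0$ is globally carried to the reflection $r$ through $\a$.

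Finally, $h_0$ does not fix $\partial F_0=B$ pointwise: it swaps the two strands $\{+\tfrac12\}\times I$ and $\{-\tfrac12\}\times I$ of $B$. To promote the first-return map to an element of $\mathrm{Map}(K)$, I would modify the transverse vector field in a collar neighborhood of $B$ so that its flow, in addition to being roughly parallel to $\partial_t$, spirals by a half turn around $B$ in the sense prescribed by $p$. This modification is supported away from $\partial F_0$ and adds a half twist around $\partial K$ to the first-return map, producing a homeomorphism that fixes $\partial K$ pointwise. By the definition recalled in Section~\ref{sec: yhomeo}, the composition of $r$ with a half twist around $\partial K$ is precisely the $Y$-homeomorphism $y$ of Lickorish. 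The main obstacle is the identification in the previous paragraph---matching the cylinder picture of $F_0$ with the octagon model of $K$ explicitly enough to confirm that $h_0$ is isotopic to $r$---together with an orientation check deciding which of the two mutually inverse $Y$-homeomorphisms appears; either choice, however, is a $Y$-homeomorphism of Lickorish.
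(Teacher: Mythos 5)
Your proposal is correct and follows essentially the same route as the paper: the first-return map of the vertical vector field $\frac{\partial}{\partial t}$ is identified with the reflection $r$ through $\a$ (the paper does this by tracking the red, green, and blue curves in Figure~\ref{fig: cylinder} and matching octagon models, while you give the cleaner explicit formula $(z,t)\mapsto(-z,t)$), and the correction of the vector field near the binding contributes the half twist that promotes $r$ to the $Y$-homeomorphism $y$. The only step you defer---verifying that $h_0$ corresponds to $r$ under the identification of $F_0$ with the octagon model of $K$---is exactly the step the paper carries out via Figures~\ref{fig: identify} and \ref{fig: kb}.
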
 

\begin{proof} Using the description of $P^2 \times S^1$ as in Figure~\ref{fig: cylinder}, one can see that the vertical vector field $\frac{\partial}{\partial t}$ is transverse to the (interior) of each page of the open book  and tangent to the binding $B=\partial F_0$. Now we consider the homeomorphism $h$ of the page $F_0$  induced by the time $1$-map of the flow of this vector field. Note that $h$ interchanges the two red arcs in Figure~\ref{fig: cylinder}. In other words, although it does not fix the binding $B$ pointwise, it takes $B$ to itself by half twist (a rotation of $\pi$ degrees).

The solid green arc appearing at the front face of the cylinder in Figure~\ref{fig: cylinder} is mapped by $h$  to the dashed green arc at the back of the cylinder  and vice versa. But the solid green arc and the dashed green arc (which are in fact both circles after identifications) are identified with each other on the page $F_0$. This means that the homeomorphism $h$  takes the green circle to itself. The flow of the vertical vector field preserves the blue circle shown at the critical level $t = \frac{1}{2}$ in Figure~\ref{fig: cylinder}, except that its orientation gets reversed (via a reflection through its midpoint) while identifying the top and the bottom $P^2$ by the map $\varphi$.

\begin{figure}[h]
\centering
\includegraphics{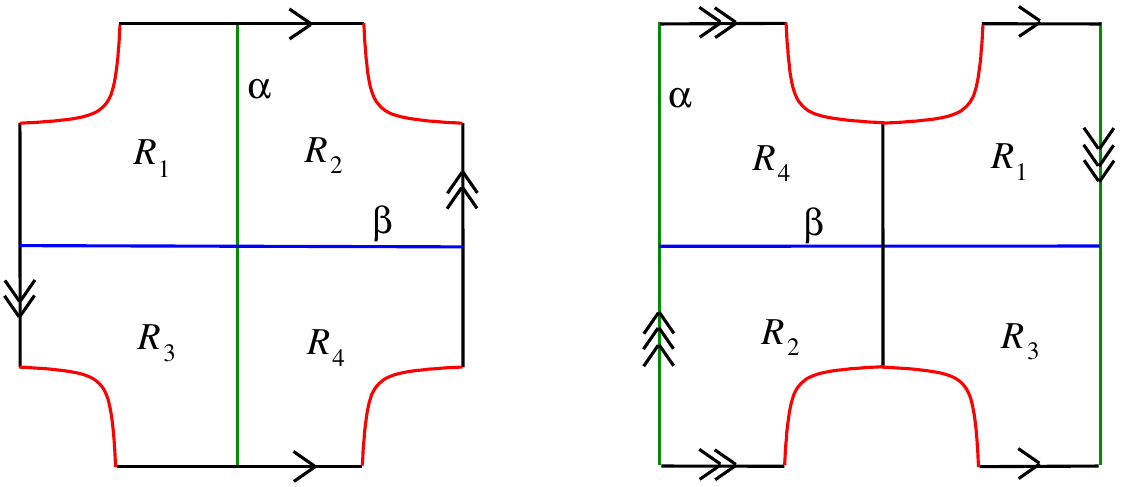}
  \caption{Two models of $K$. The one on the right (which is  a copy of Figure~\ref{fig: identify}) is obtained by cutting the one on the left (which is a copy of   Figure~\ref{fig: kleinbottle}) along $\a$ and gluing back the resulting pieces as indicated.  }
       \label{fig: kb}
\end{figure}

We conclude that, in the model of $K$ depicted in Figure~\ref{fig: identify}, the homeomorphism $h$ is a reflection of $K$ that interchanges the region $R_1$ with the region $R_2$, and the region $R_3$ with the region $R_4$.  This reflection in the model of $K$ in Figure~\ref{fig: identify} is equivalent to the reflection through $\a$ in Figure~\ref{fig: kleinbottle}, as illustrated in Figure~\ref{fig: kb}.

%The $y$-homeomorphism (reflection through $\a$) interchanges $A \leftrightarrow B $ and   $C \leftrightarrow D $

Hence the homeomorphism $h$ on $F_0 = K$ induced by the vertical vector field is equivalent to the reflection homeomorphism $r$ defined in Section~\ref{sec: yhomeo}. Therefore, by perturbing the vector field near the binding $B$  so that it is meridional (and hence its flow fixes $B$ pointwise),  we conclude that the monodromy as an element of $Map(K)$, is the $Y$-homeomorphism $y: K \to K$ of Lickorish.  \end{proof}

%Just as we explained in Section~\ref{sec: yhomeo},  $y$ can be viewed as an element of $Map(K)$ by rotating by $\pi$ degrees near $\partial K$.

\section{Another proof of Theorem~\ref{thm: main}}

In this section, we provide an alternative proof of Theorem~\ref{thm: main}. We denote by $OB(N,\phi)$ the closed $3$-manifold which is the {\em total space} of an abstract open book whose page is a surface  $N$, and whose monodromy is $\phi \in Map (N)$.

%Suppose that $M$ is a closed nonorientable $3$-manifold which is homeomorphic to  the total space of an abstract open book , and  In short, we will write $M=OB(N,\phi)$. 

\begin{theorem}\label{thm: openb} If $y \in Map (K)$ is the $Y$-homeomorphism of Lickorish, then $OB(K, y)= P^2 \times S^1$. \end{theorem}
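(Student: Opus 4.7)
The plan is to identify $OB(K,y)$ with $P^2\times S^1$ by exploiting the involution structure of $r$, rather than going through an explicit embedding as in Section~\ref{sec: cotlef}. Since $r^2=\mathrm{id}_K$, the mapping torus of $r$ admits the natural $\mathbb Z/2$-quotient presentation
\[
M_r \;=\; (K\times S^1)\big/\langle\sigma\rangle,\qquad \sigma(x,z)=(r(x),\bar z),
\]
exhibiting $M_r$ (and eventually, after Dehn filling and correcting for $h$, the whole $OB(K,y)$) as a quotient of a product space by a natural involution. This $\mathbb Z/2$-quotient structure is the organizing principle of the proof.

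Next, I would extend $r$ to the closed Klein bottle $\hat K = K\cup_\partial D^2$ by capping $\partial K$ with an $\hat r$-invariant disk, and study the extended quotient $(\hat K\times S^1)/\langle\hat\sigma\rangle$. Writing $\hat K = T^2/\langle g\rangle$ for the glide reflection $g(x,y)=(-x,y+\tfrac{1}{2})$ and lifting $\sigma$ to the double cover $T^2\times S^1$, the relevant deck group is a Klein four-group $\Gamma=\langle g,\sigma\rangle\cong(\mathbb Z/2)^2$. A direct fixed-point computation shows that the diagonal involution $\sigma g$ acts on $\hat K$ with exactly two fixed points, and Riemann--Hurwitz then gives that the underlying topological space of $\hat K/(\sigma g)$ is $P^2$ (with two cone points of order two). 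Keeping track of the $S^1$-factor and the remaining $\Gamma$-identifications should yield a homeomorphism $(\hat K\times S^1)/\Gamma\cong P^2\times S^1$, and restricting this identification back to the open piece $(K\times S^1)/\langle\sigma\rangle$ realizes $M_r$ as the complement of a specific solid torus in $P^2\times S^1$.

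The final step is to pass from $OB(K,r)$ to $OB(K,y)=OB(K,h\circ r)$ by accounting for the half twist $h$ near $\partial K$. This amounts to a framing modification of the solid torus filled along the binding, and one must verify that the framing induced by $h$ produces $P^2\times S^1$ itself rather than some other Dehn filling. I expect this framing bookkeeping to be the main obstacle of the argument: the fractional Dehn twist contribution of $h$ must exactly match the framing dictated by the $\Gamma$-quotient structure, and pinning this down requires essentially the same kind of coordinate-level analysis that underpinned the vertical-flow argument in the proof of Theorem~\ref{thm: main}.
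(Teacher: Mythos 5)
Your strategy is genuinely different from the paper's proof, which is purely algebraic: it computes the action of $y_*$ on the free group $\pi_1(K,p)=\langle a,b\rangle$, deduces $\pi_1(OB(K,y))\cong\Z\times\Z_2$ after killing the meridian, and then invokes a recognition theorem (the Poincar\'{e} conjecture, or Ochiai's classification of nonorientable $3$-manifolds of Heegaard genus two with that fundamental group). However, your quotient-space approach as written contains a false intermediate claim and leaves the decisive step undone. A small point first: the deck transformation presenting the mapping torus $M_r$ as a quotient of $K\times S^1$ is $(x,z)\mapsto(r(x),-z)$, not $(x,z)\mapsto(r(x),\bar z)$; conjugation has fixed points on $S^1$, so your $\sigma$ is not free and its quotient fibers over an interval rather than over $S^1$. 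More seriously, with the corrected involution your extended quotient $(\hat K\times S^1)/\langle\hat\sigma\rangle=(T^2\times S^1)/\Gamma$ is exactly the mapping torus of $\hat r$ acting on the \emph{closed} Klein bottle $\hat K$. Its fundamental group contains the nonabelian Klein bottle group $\pi_1(\hat K)$ as a subgroup (the fiber subgroup of a surface bundle over $S^1$ injects), whereas $\pi_1(P^2\times S^1)\cong\Z\times\Z_2$ is abelian; hence $(\hat K\times S^1)/\Gamma\not\cong P^2\times S^1$. The underlying error is that capping $K$ off fiberwise by a disk performs the Dehn filling of $\partial M_r$ along the wrong slope: $OB(K,y)$ is the filling that kills the loop transverse to the pages (the loop $\mu$ that becomes null-homotopic when the solid torus neighborhood of the binding is glued in), not the fiberwise filling.

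Consequently the entire content of the theorem is concentrated in the step you defer to the last paragraph, namely identifying which slope on $\partial M_r$ the half twist $h$ forces and showing that filling along that particular slope yields $P^2\times S^1$. You acknowledge that this ``framing bookkeeping'' requires essentially the coordinate-level analysis underlying the vertical-flow argument---but that analysis \emph{is} the proof of Theorem~\ref{thm: main}, so the proposal either presupposes the result it is meant to reprove or leaves its decisive computation open. To salvage the approach you would need an explicit identification of $M_r$ with the complement of Klassen's binding $B$ in $P^2\times S^1$ together with a computation of the page-meridian slope on the boundary torus; alternatively, abandon the quotient picture and argue as the paper does, via the presentation $\langle a,b\mid a=bab^{-1},\,b=hb^{-1}h^{-1}\rangle\cong\Z\times\Z_2$ and a recognition theorem.
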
 

\begin{proof} If $p$ is a point  in $\partial K$,  then $\pi_1 (K, p)$ is freely generated  by the oriented loops $a$ and $b$ depicted in Figure~\ref{fig: fund}. First we compute the fundamental group of the mapping torus $K_y= K \times [0,1] / \sim$ based at the point $p$,  where $(x, 1) \sim (y(x), 0)$. Let $\t$ denote the loop $[0,1] / \sim$ based at $p$ and let $y_*$ denote the action of $y$ on  $\pi_1 (K_y, p)$. Then $$\pi_1(K_ y, p) = \langle a,b, \t \;|\; \t a \t ^{-1}= y_*a, \t b \t ^{-1}=y_*b \rangle.$$  

\begin{figure}[h]
\centering
\includegraphics{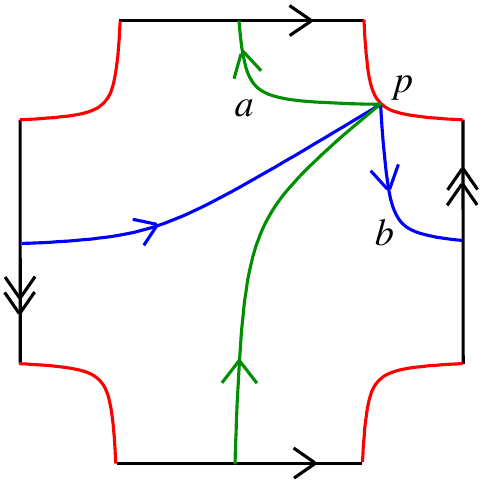}
 \caption{ $\pi_1 (K, p)$ is freely generated by the oriented loops $a$ and $b$.}
       \label{fig: fund}
\end{figure}

\begin{figure}[h]
\centering
\includegraphics{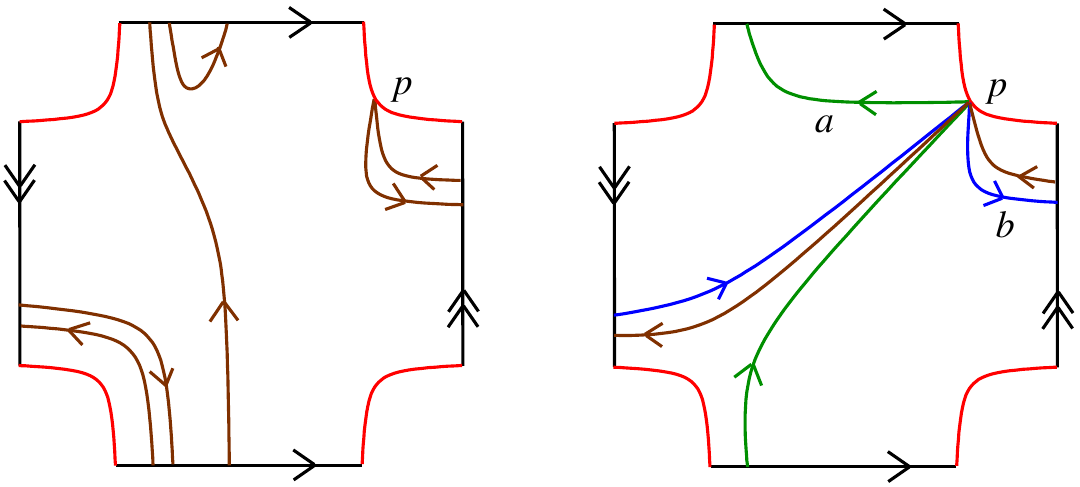}
  \caption{The loop $y_*a$ (left) is homotopic to $bab^{-1}$ (right).  }
       \label{fig: ima}
\end{figure}

\begin{figure}[h]
\centering
\includegraphics{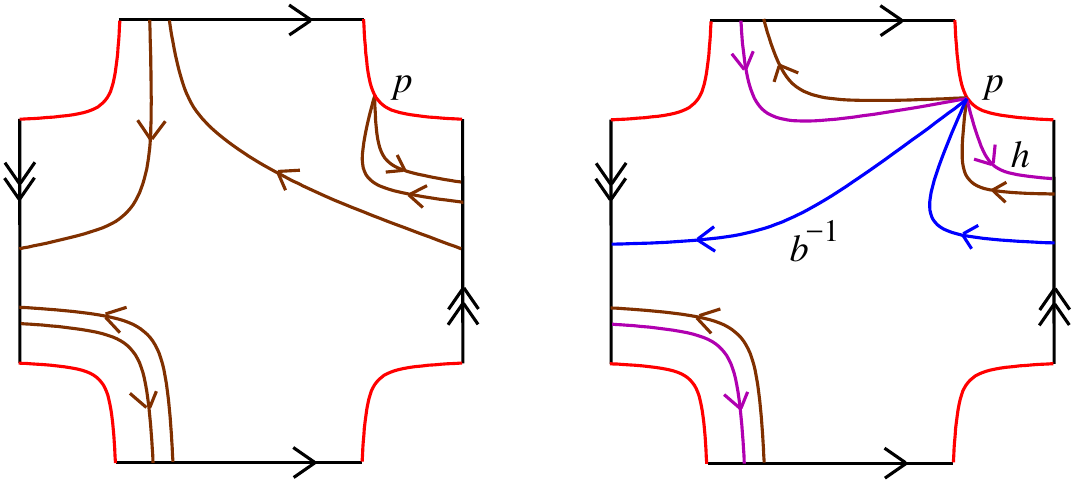}
  \caption{The loop $y_*b$ (left) is homotopic to $hb^{-1}h^{-1}$ (right).  }
       \label{fig: imb}
\end{figure}

It can be verified that $y_*a$ is homotopic to  $bab^{-1}$ (see Figure~\ref{fig: ima})  and $y_*b$ is homotopic to  $ hb^{-1}h^{-1}$  (see Figure~\ref{fig: imb}), for some loop $h$ based at $p$.  To obtain $OB(K, y)$ from the mapping torus $K_y$, we just glue a solid torus (a neighborhood of the binding) where the loop $\t$ is null-homotopic.  Therefore, the fundamental group of $OB(K, y)$ based at $p$ has the following presentation $$\langle a, b \; | \; a=bab^{-1} , \; b= hb^{-1}h^{-1} \rangle.$$  This group is abelian because of the first relation and the second relation gives $b^2=1$.  Hence $\pi_1 (OB(K, y), p)$  is isomorphic to $  \mathbb{Z} \times \mathbb{Z}_2 $. It is well-known that a closed $3$-manifold whose fundamental group is isomorphic to $  \mathbb{Z} \times \mathbb{Z}_2 $ is homeomorphic to $P^2 \times S^1$ (see, for example, Table 1.2 in \cite{afw}).  Here, the {\em Poincar\'{e} conjecture} is needed to rule out the case of connected sum with a homotopy sphere.  There is, however,  an alternative proof avoiding the use of the Poincar\'{e} conjecture as follows:  The $3$-manifold $OB(K, y)$ has a (nonorientable) Heegaard splitting of genus two obtained from the genus two open book and Ochiai \cite{oc} proved that any closed nonorientable $3$-manifold with fundamental group $  \mathbb{Z} \times \mathbb{Z}_2 $, which admits a Heegaard splitting of genus two,  is  homeomorphic to $P^2 \times S^1$.  \end{proof}

\section{Fundamental group of a nonorientable open book}

The total space of an open book is orientable if and only if the pages of the open book are orientable. Therefore, we will refer to an open book with  nonorientable  pages as a  {\em nonorientable open book}, in short. The genus of an open book is defined as the genus of its page as a (not necessarily orientable) surface with boundary.

%In addition, if the page of a nonorientable open book is of genus $k$, then we call such an open book a genus $k$ open book. For example, Klassen's open book on $P^2 \times S^1$ is a genus two open book with connected binding. Note that the main result of \cite{gps} can be rephrased as ``Any closed nonorientable $3$-manifold admits a genus one open book."

In the following, we briefly review a method to calculate the fundamental group of the total space of any abstract nonorientable open book in dimension three. The discussion below is similar to the orientable case described in \cite[Section 2.1]{eo}. Suppose that $N$ is a nonorientable surface of genus $k$ with $r$ boundary components,  and let $\phi \in Map (N)$.  Then the total space 
 of the abstract open book with page $N$ and monodromy $\phi$,  is denoted by $OB(N,\phi)$, which is a closed nonorientable $3$-manifold. 

For all $1 \leq j \leq  r$, let $p_j$ be a point on the $j$-th boundary component of $\partial N$. Let $a_1,\ldots, a_k, c_1, \ldots, c_r$ be
the standard generators of $\pi_1(N, p_1)$, where
$c_i$'s correspond to loops around the boundary components.   Let $\phi_*$ denote the action induced by  $\phi$ on $\pi_1(N, p_1)$ and let $\t_j$ be the loop $[0,1] / \sim$ based at $p_j$ in the mapping torus $N_\phi =  N \times [0,1] / \sim$, where $(x, 1) \sim (\phi(x), 0)$. Then the fundamental group of $N_\phi$ based at $p_1$ has the following presentation:  $$ \langle a_1,\ldots, a_k, c_1, \ldots, c_r,  \t_1 \;|\; \prod_{i=1}^k  a_i^2 \prod_{j=1}^r c_j =1,\; \t_1 a_i \t_1 ^{-1}= \phi_*a_i, \; \t_1 c_j \t_1 ^{-1}=\phi_*c_j \rangle.$$  
For each $1 \leq j \leq  r$, let $\d_j \subset N$ be an arc connecting the base point $p_1$ to $p_j$. It follows that $\t_1 \d_j \t_j^{-1} \phi_* (\d_j^{-1})$ bounds a disk in $N_\phi$. To obtain $OB(N, \phi)$ from $N_\phi$, we glue in $r$ copies of the solid torus where $\t_j$ becomes null-homotopic for all $j =1, \ldots, r$. Consequently, we get a presentation of $\pi_1(OB(N, \phi), p_1)$  as follows:
$$ \langle a_1, \ldots, a_k, c_1, \ldots, c_r  \; \vert \; \prod_{i=1}^k  a_i^2 \prod_{j=1}^r c_j=1, \;a_i =\phi_*(a_i),\;  \; \d_j=\phi_*(\d_j) \rangle.$$
One can, of course, calculate the first homology group $H_1(M)$ by abelianizing $\pi_1(M)$. 
  
%It follows that  $$H_1(M)= \langle\;  a_1, \ldots, a_k, c_1, \ldots, c_r \; \vert \; \;a_i-\phi_*(a_i),\; \sigma_j -\phi_*(\sigma_j) \; \rangle,$$ where $\phi_*$ now denotes the action of $\phi$ on $H_1(\Sigma)$.
%Note that $c_1=-(c_2+\cdots +c_r)$ in $H_1 (\Sigma)$.

\section{Murasugi Sum of nonorientable open books} 

Suppose that $M_i = OB(\Sigma_i, \phi_i)$ where $\Sigma_i$ is an {\em oriented} surface and $\phi_i \in Map(\Sigma_i)$ for $i=1,2$. Stallings \cite{s2} proved, using the language of fibered knots instead of open books, that $M_1 \# M_2 = OB(\Sigma_1 * \Sigma_2, \phi_1 \circ \phi_2)$, where $\Sigma_1 * \Sigma_2$ denotes the Murasugi sum (a.k.a. plumbing) of the pages $\Sigma_1 $ and $\Sigma_2$.  In the following,  we state Stallings'  theorem without any assumption on the orientability of the pages but Gabai's geometric proof  \cite{g} of Stallings'  theorem holds true as long as the  Murasugi sum is performed along two-sided arcs on the pages.

\begin{proposition} \label{prop: gabai} Suppose that $M_i = OB(N_i, \phi_i)$ where $N_i$ is a surface which is not necessarily orientable and $\phi_i \in Map(N_i)$ for $i=1,2$. Then $M_1 \# M_2 = OB(N_1 * N_2, \phi_1 \circ \phi_2)$, provided that the Murasugi sum $N_1 * N_2$ is performed along two-sided arcs on $N_1$ and $N_2$. 
\end{proposition}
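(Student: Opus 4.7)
The plan is to follow Gabai's geometric proof of Stallings' theorem and isolate the precise point where orientability of the pages is used, replacing it by two-sidedness of the plumbing arcs. Recall that the Murasugi sum $N_1 * N_2$ along arcs $\alpha_i \subset N_i$ is formed by identifying a regular neighborhood of $\alpha_i$ in $N_i$ with a rectangle $I \times I$ and gluing the two rectangles together in a common disk $D$, so that the four arcs of $\partial D$ alternate between $\partial N_1$ and $\partial N_2$. For each regular neighborhood to be a rectangle rather than a M\"obius band one needs $\alpha_i$ to be two-sided in $N_i$; this is precisely the role of the hypothesis.

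First, I would realize each $M_i$ as $OB(N_i,\phi_i)$ and choose a product neighborhood $D \times [-\epsilon,\epsilon] \subset M_i$ of the plumbing square, with $D \times \{0\}$ containing the portion of the page near $\alpha_i$ and with the interval direction transverse to all nearby pages. Two-sidedness of $\alpha_i$ in $N_i$, together with the fact that every page is automatically two-sided in its own open book, produces such a product neighborhood. I would then form $M_1 \# M_2$ by excising a small $3$-ball from the middle of the two product neighborhoods and gluing their boundary spheres in the standard way so that the plumbing squares are identified, which causes the pages to join along $D$ into copies of $N_1 * N_2$.

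The core step is to build an explicit fibration of $(M_1 \# M_2) \setminus B$ over $S^1$, where $B$ is the new binding obtained by joining $B_1$ and $B_2$ through the plumbing region. To identify the monodromy, I would choose a vector field transverse to the pages that, on the $M_i$-side outside a small collar of $D$, agrees with the monodromy flow of $\phi_i$, and inside the plumbing region realizes first $\phi_1$ (acting on the $N_1$-part and the identity on the rectangle coming from $N_2$) and then $\phi_2$ (symmetrically); the time-one map of this flow is then $\phi_1 \circ \phi_2$.

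The main obstacle is to verify that in the nonorientable setting the transverse flow near the plumbing square really can be chosen so that its first-return map equals the composition, with no spurious reflection introduced by the nonorientability of the ambient pages. The two-sidedness of $\alpha_i$ in $N_i$ is precisely what guarantees that the local model near the plumbing square is identical to the orientable one, so Gabai's construction of the transverse vector field and the identification of its return map go through unchanged, and the remaining steps of his argument are orientation-free.
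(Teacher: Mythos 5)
Your proposal follows essentially the same route as the paper: the paper does not write out an argument at all, but simply asserts that Gabai's geometric proof (as presented in Etnyre's notes, or in the more general framework of Ozbagci--Popescu-Pampu) carries over once the plumbing is performed along two-sided arcs, which is exactly the reduction you sketch, down to the identification of the return map of the transverse flow with $\phi_1 \circ \phi_2$. One small caveat: your stated ``role of the hypothesis''---that a regular neighborhood of $\alpha_i$ might otherwise be a M\"obius band---is not quite right, since a properly embedded arc in any surface, orientable or not, always has a rectangular regular neighborhood; the two-sidedness is really needed, as you say later, to pin down the local model of the plumbing square inside the ambient $3$-manifold so that the push-offs and the gluing of the two product neighborhoods match the orientable picture.
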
 

A proof of Proposition~\ref{prop: gabai} can be obtained by adapting Gabai's geometric proof  in the orientable case described with a different point of view in Etnyre's notes \cite{et}. One can also consult \cite{op}, for  a more general approach.  
Based on Proposition~\ref{prop: gabai},  below we identify the total spaces of some open books with page $K$, but first we prove a simple result for an arbitrary nonorientable surface with boundary.

\begin{lemma} \label{lem: mob} Let $N_{g,k}$ denote a nonorientable surface of genus $g$ with $k \geq 1$ boundary components. Then $$OB(N_{g, k}, id)= \#_g S^2 \widetilde{\times} S^1 
\#_{k-1}  S^2 \times S^1 = \#_{g+k-1} S^2 \widetilde{\times} S^1 $$ where $S^2 \widetilde{\times} S^1$  denotes the nonorientable  
$S^2$-bundle over $S^1$. \end{lemma}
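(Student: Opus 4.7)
The plan is to identify $OB(N_{g,k}, id)$ with the boundary of the $4$-manifold $N_{g,k} \times D^2$ and then to read off its diffeomorphism type from an explicit handle decomposition. The starting observation is
$$\partial(N_{g,k} \times D^2) = (N_{g,k} \times S^1) \cup (\partial N_{g,k} \times D^2),$$
with the pieces glued along $\partial N_{g,k} \times S^1$; the right-hand side is precisely the total space of the trivial open book, namely the mapping torus of the identity on $N_{g,k}$ with a solid torus glued along each boundary component of the page so that its meridian matches the $S^1$-factor.

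First I would record a handle decomposition of the page. For $g \geq 1$, the surface $N_{g,k}$ is diffeomorphic to the boundary connected sum $\natural_g M \,\natural_{k-1} A$ of $g$ M\"obius bands $M$ and $k-1$ annuli $A$, as one verifies by matching Euler characteristic and counting boundary components. Equivalently, $N_{g,k}$ is obtained from a $2$-disk by attaching $g$ twisted and $k-1$ untwisted $1$-handles. Multiplying this decomposition by $D^2$ produces a $4$-dimensional handle decomposition of $N_{g,k} \times D^2$ consisting of a single $0$-handle $D^4$ together with $g$ nonorientable and $k-1$ orientable $4$-dimensional $1$-handles. Since attaching an orientable (respectively nonorientable) $4$-dimensional $1$-handle changes the boundary by connected sum with $S^2 \times S^1$ (respectively $S^2 \widetilde{\times} S^1$), starting from $\partial D^4 = S^3$ we obtain
$$OB(N_{g,k}, id) = \partial(N_{g,k} \times D^2) = \#_g\, S^2 \widetilde{\times} S^1 \,\#_{k-1}\, S^2 \times S^1,$$
which is the first equality of the lemma.

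For the second equality I would appeal to the classical fact that in any nonorientable $3$-manifold an $S^2 \times S^1$ connected summand can be converted to an $S^2 \widetilde{\times} S^1$ summand (and vice versa) by sliding one foot of the corresponding $1$-handle along an orientation-reversing loop. Since $g \geq 1$ already forces $\#_g\, S^2 \widetilde{\times} S^1$ to be nonorientable, each of the $k-1$ orientable summands may be so converted, yielding $\#_{g+k-1}\, S^2 \widetilde{\times} S^1$. The main obstacle I anticipate is this final step: it rests on the standard but not immediately obvious interchangeability of $S^2 \times S^1$ and $S^2 \widetilde{\times} S^1$ summands inside a nonorientable ambient manifold, whereas the remaining steps (writing down the handle structure of $N_{g,k}$, taking the product with $D^2$, and reading off the boundary after attaching the $1$-handles) are routine.
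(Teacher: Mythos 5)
Your argument is correct, but it is not the route the paper's proof takes: the paper proves Lemma~\ref{lem: mob} by decomposing $N_{g,k}$ as a Murasugi sum of $g$ M\"obius bands and $k-1$ annuli and then applying the nonorientable version of Stallings' plumbing theorem (Proposition~\ref{prop: gabai}), starting from the base cases $OB(\A, id) = S^2 \times S^1$ and $OB(\M, id) = S^2 \widetilde{\times} S^1$. What you wrote is, almost verbatim, the ``alternative $4$-dimensional proof'' that the paper records in the remark immediately following the lemma: identify $OB(N_{g,k}, id)$ with $\partial(D^2 \times N_{g,k})$, present $D^2 \times N_{g,k}$ as $D^4$ with $g$ nonorientable and $k-1$ orientable $1$-handles, and read off the boundary as the stated connected sum. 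The Murasugi-sum proof has the advantage of reusing machinery the paper develops anyway (Proposition~\ref{prop: gabai} is also what drives Proposition~\ref{prop: lens}), while your handle-theoretic proof is self-contained and avoids invoking the nonorientable Stallings--Gabai result, at the cost of needing the effect of $4$-dimensional $1$-handle attachment on the boundary. Your treatment of the second equality---trading each $S^2 \times S^1$ summand for an $S^2 \widetilde{\times} S^1$ summand by sliding a handle foot around an orientation-reversing loop, which is legitimate because $g \geq 1$ makes the ambient manifold nonorientable---is a standard fact that the paper uses without comment, and your explicit justification of it is a welcome addition rather than a defect.
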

\begin{proof}  Let  $\A$ denote an annulus, and $\M$ denote a M\"{o}bius band. Then it is easy to see that $OB(\A, id)= S^2 \times S^1$ and $OB(\M, id)= S^2 \widetilde{\times} S^1$. The result follows from Proposition~\ref{prop: gabai}, since  $N_{g,k}$ can be obtained by a Murasugi sum of $g$ copies of $\M$ and $k-1$ copies of $\A$. \end{proof} 

Note that an alternative $4$-dimensional proof of Lemma~\ref{lem: mob} can be obtained as follows. The $4$-manifold $D^2\times N_{g,k}$ is obtained by attaching $g$ nonorientable and $k-1$ orientable $1$-handles to $D^4$ and hence it is diffeomorphic to   
$\natural_{g} D^3 \widetilde{\times}  S^1 \natural_{k-1}  D^3 \times S^1$, where  $D^3 \widetilde{\times} S^1$ denotes the nonorientable 
$D^3$-bundle over $S^1$. Therefore, we obtain a diffeomorphism between their boundaries $$  \#_g S^2 \widetilde{\times} S^1 
\#_{k-1}  S^2 \times S^1 = \partial  (\natural_{g} D^3 \widetilde{\times} S^1 \natural_{k-1}  D^3 \times   S^1) = \partial (D^2\times N_{g,k} ) = OB(N_{g,k}, id).$$

\begin{proposition}\label{prop: lens} For any $n \in \Z$, we have $$OB(K, t_\a^n) = L(|n|, 1) \# S^2 \widetilde{\times} S^1$$ where $t_\a$ denotes the Dehn twist along the curve $\alpha$ in Figure~\ref{fig: plumb}.  In particular,  $OB(K, id) = S^2 \times S^1 \# S^2 \widetilde{\times} S^1= \#_2 S^2 \widetilde{\times} S^1$, and $ OB(K, t^{\pm 1}_\a)= S^2 \widetilde{\times} S^1.$ \end{proposition}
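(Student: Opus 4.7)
The plan is to realize $K$ as a Murasugi sum of an annulus $\A$ and a M\"obius band $\M$ in such a way that the curve $\a$ of Figure~\ref{fig: plumb} becomes the core of the annular factor, and then apply Proposition~\ref{prop: gabai} to split the open book as a connected sum. Concretely, a regular neighborhood $\nu(\a) \subset K$ is a two-sided annulus with core $\a$. The content of Figure~\ref{fig: plumb} (together with an Euler characteristic check, $\chi(\A) + \chi(\M) - 1 = -1 = \chi(K)$, and a boundary-component count showing $\partial(\A \ast \M)$ is connected when the plumbing rectangle is chosen appropriately) should exhibit $K = \A \ast \M$ as a Murasugi sum performed along a spanning arc of $\A$ which is two-sided in both $\A$ (automatic, since $\A$ is orientable) and $\M$ (a properly embedded boundary-parallel arc). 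This puts us in the setting of Proposition~\ref{prop: gabai}.

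Since $t_\a$ is supported in a neighborhood of the core of $\A$ and can be extended by the identity over $\M$, the monodromy $t_\a^n \in \mathrm{Map}(K)$ factors as $t_\a^n \circ \mathrm{id}$ with $t_\a^n \in \mathrm{Map}(\A)$ and $\mathrm{id} \in \mathrm{Map}(\M)$. Proposition~\ref{prop: gabai} then yields
\[
OB(K, t_\a^n) \;=\; OB(\A, t_\a^n) \,\#\, OB(\M, \mathrm{id}).
\]
The second summand equals $S^2 \widetilde{\times} S^1$ by Lemma~\ref{lem: mob}, while the first summand is the standard open book description of the lens space $L(|n|, 1)$ for every $n \in \Z$, with the usual convention $L(0,1) = S^2 \times S^1$ and $L(1,1) = S^3$. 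This already proves the main assertion $OB(K, t_\a^n) = L(|n|, 1) \,\#\, S^2 \widetilde{\times} S^1$.

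Finally I would read off the two special cases. For $n = 0$, the formula gives $S^2 \times S^1 \,\#\, S^2 \widetilde{\times} S^1$, which equals $\#_2 S^2 \widetilde{\times} S^1$ by Lemma~\ref{lem: mob} (applied with $g = k = 1$ versus $g = 2, k = 1$, both equal to $OB(N_{g,k}, \mathrm{id})$), or equivalently by the well-known identity that connect-summing with any nonorientable $3$-manifold converts $S^2 \times S^1$ into $S^2 \widetilde{\times} S^1$. For $n = \pm 1$, we get $L(1,1) \,\#\, S^2 \widetilde{\times} S^1 = S^3 \,\#\, S^2 \widetilde{\times} S^1 = S^2 \widetilde{\times} S^1$.

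The only genuine obstacle is Step~1: producing an explicit plumbing decomposition $K = \A \ast \M$ with $\a$ the core of $\A$ and with the plumbing arcs two-sided on both factors, so that Proposition~\ref{prop: gabai} is applicable. Once this picture is in place (and Figure~\ref{fig: plumb} is presumably designed to make it transparent), the rest of the argument is a formal application of the Murasugi sum formula together with the standard identifications $OB(\A, t_\a^n) = L(|n|,1)$ and $OB(\M, \mathrm{id}) = S^2 \widetilde{\times} S^1$.
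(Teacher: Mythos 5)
Your proposal is correct and follows essentially the same route as the paper: decompose $K$ as the Murasugi sum $\A * \M$ of Figure~\ref{fig: plumb}, apply Proposition~\ref{prop: gabai} together with Lemma~\ref{lem: mob}, and identify $OB(\A, t_\a^n)$ with a lens space. The only difference is cosmetic: the paper is careful that with the orientation conventions of \cite[Lemma 5.1]{eo} one gets $L(n,n-1)$ for $n\geq 0$ and then invokes the (orientation-reversing) homeomorphism $L(n,n-1)\cong L(n,1)$, whereas you state $L(|n|,1)$ directly---which is harmless here since only the unoriented homeomorphism type matters once you connect-sum with the nonorientable $S^2 \widetilde{\times} S^1$.
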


\begin{figure}[h]
\centering
\includegraphics[scale=0.5]{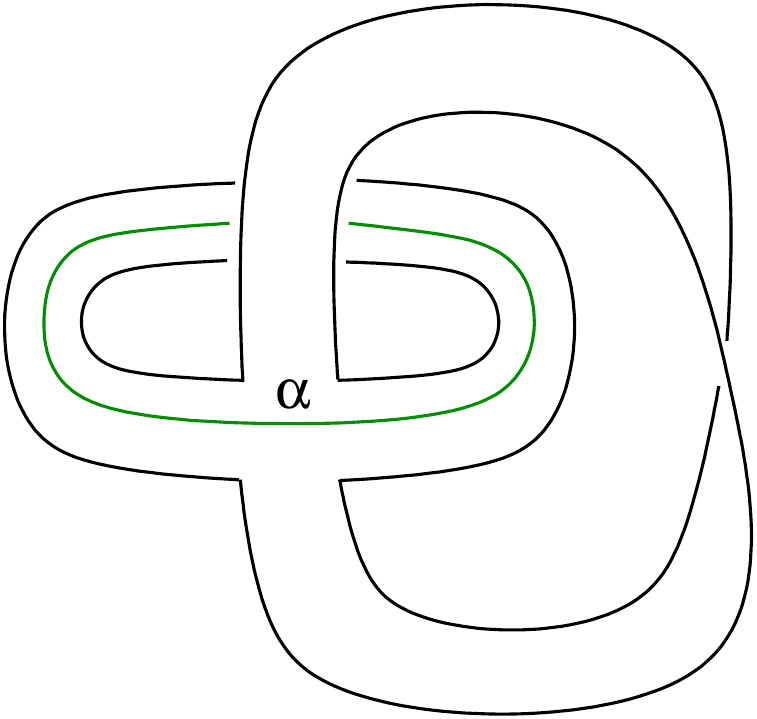}
  \caption{  }
       \label{fig: plumb}
\end{figure}

\begin{proof}  It is clear that $K$ can be obtained by the Murasugi sum of $\A$ and $\M$ as depicted in Figure~\ref{fig: plumb}.  Note that we have $OB(\A, t_\a^n) =L(n, n-1)$ for $n \geq 0$ and  $OB(\A, t_\a^n) =L(|n|, 1)$  for $n < 0$,  as {\em oriented} $3$-manifolds (cf. \cite[Lemma 5.1]{eo}). We also have $OB(\M, id) = S^2 \widetilde{\times} S^1$ by Lemma~\ref{lem: mob}.  Hence, by Proposition~\ref{prop: gabai},  we conclude that $OB(K, t_\a^n) = L(|n|, 1) \# S^2 \widetilde{\times} S^1$ for all $n \in \Z$, since there is an orientation-reversing homeomorphism between  the lens spaces $L(n,1)$ and $L(n, n-1)$ for $n >0$. \end{proof} 

%The open book $OB(K, t_\a^n)$ can be viewed as a Murasugi sum of $OB (A, t_\a^n)$ and  $OB(\M, id)$. 

\section{Infinitely many isomorphic nonorientable open books} 

Stukow \cite{st} showed that the mapping class group $Map(K)$ has a presentation with two generators:  the $Y$-homeomorphism $y$,  and the Dehn twist $t_{\a}$  about the two-sided curve $\a$ in Figure~\ref{fig: plumb}, with only one relation: $ t_{\a} y t_{\a} = y.$ It follows that any element of $Map(K)$ can be uniquely expressed as $t_{\a}^m y^n$ for some integers $m, n$.

\begin{definition} \cite{ps} The homeomorphism   $u=t_\a^{-1} y  \in Map(K)$ is called a {\em crosscap transposition.} \end{definition}

\begin{proposition} \label{prop: cross} For  $u \in Map(K)$, we have $OB(K, u)=  S^2 \widetilde{\times} S^1.$
\end{proposition}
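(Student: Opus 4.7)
My plan is to compute $\pi_1(OB(K, u))$ via the presentation developed in Section 5, show that it is infinite cyclic, and then identify the total space as $S^2 \widetilde{\times} S^1$ by a classification argument parallel to the second proof of Theorem~\ref{thm: main}.

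First, I would decompose $u_* = (t_\alpha^{-1})_* \circ y_*$ on $\pi_1(K,p) = \langle a,b \rangle$, where $a$ and $b$ are the free generators depicted in Figure~\ref{fig: fund}. The action of $y_*$ is already computed in the proof of Theorem~\ref{thm: openb}: $y_* a = bab^{-1}$ and $y_* b = hb^{-1}h^{-1}$ for some loop $h$ based at $p$. To pin down $(t_\alpha)_*$, I would use that $\alpha$ is the core of the annular summand in the plumbing $K = \A * \M$ of Figure~\ref{fig: plumb}, so that up to isotopy one generator, say $a$, is disjoint from $\alpha$ while the other, $b$, meets $\alpha$ transversely in one point. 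The standard Dehn twist formula then gives $(t_\alpha)_* a = a$ and $(t_\alpha)_* b = a^{\pm 1} b$, with the sign fixed by orientation conventions and cross-checked against Proposition~\ref{prop: lens}. Composing these two actions yields explicit words for $u_* a$ and $u_* b$.

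The formula from Section 5 then gives
\[
\pi_1(OB(K, u), p) = \langle a, b \mid a = u_* a, \; b = u_* b \rangle.
\]
I would first examine $a = u_* a$: since $(t_\alpha^{\pm 1})_*$ fixes $a$, this relation collapses to $a = bab^{-1}$ exactly as for the pure $Y$-homeomorphism, forcing $a$ and $b$ to commute. Once the group is abelian, every word in $a$ and $b$ commutes with both generators, so the auxiliary word $h$ disappears up to conjugation from the relation $b = u_* b$, which then reduces to $b = b^{-1} a^{\pm 1}$, i.e., $a = b^{\pm 2}$. Hence $\pi_1(OB(K, u), p) = \langle b \rangle \cong \Z$.

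Finally, $OB(K, u)$ is closed and nonorientable (because $K$ is nonorientable) and inherits a nonorientable genus two Heegaard splitting from the open book. By an argument parallel to the second half of the proof of Theorem~\ref{thm: openb}, invoking either the Poincar\'e conjecture or an Ochiai-type result in the nonorientable setting, the only such manifold with fundamental group $\Z$ is $S^2 \widetilde{\times} S^1$. The main obstacle I anticipate is verifying the Dehn twist formula $(t_\alpha)_* b = a^{\pm 1} b$ with the correct sign and orientation conventions relative to Figure~\ref{fig: fund}, and confirming cleanly that the loop $h$ drops out after $a$ and $b$ are forced to commute; given the analogous computation already carried out for $y$ alone, this should be routine bookkeeping.
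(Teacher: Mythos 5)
Your proposal follows essentially the same route as the paper: the paper's proof simply asserts that $\pi_1(OB(K,u))\cong\Z$ ``by a calculation similar to the one given in the proof of Theorem~\ref{thm: openb}'' and then invokes either the Poincar\'e conjecture or Ochiai's genus-two result, exactly as you do. Your write-up supplies more of the bookkeeping for $u_*=(t_\a^{-1})_*\circ y_*$ than the paper does, and your observations that the relation $a=u_*a$ still collapses to $a=bab^{-1}$ and that the second relation then abelianizes to $a=b^{\pm2}$ are the correct details behind the paper's one-line claim.
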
 
\begin{proof} The fundamental group of $OB(K, u)$ based at a point $p \in \partial K$ is isomorphic to $\Z$, which can be verified by a calculation similar to the one given in the proof  of  Theorem~\ref{thm: openb}.  Hence it  follows that $OB(K, u)=  S^2 \widetilde{\times} S^1$, as a consequence of the Poincar\'{e} conjecture. There is, however,  an alternative proof avoiding the use of the Poincar\'{e} conjecture as follows:  The $3$-manifold $OB(K, u)$ has a (nonorientable) Heegaard splitting of genus two obtained from the genus two open book and Ochiai \cite{oc} proved that any closed nonorientable $3$-manifold with fundamental group $  \mathbb{Z}$, which admits a Heegaard splitting of genus two,  is  homeomorphic to $ S^2 \widetilde{\times} S^1.$ 
\end{proof}

\begin{lemma} \label{lem: par} For any $m, n \in \Z$, the homeomorphism $t_\a^{2m} y^{2n+1} $ is conjugate to $y^{2n+1}$ and the homeomorphism  $t_\a^{2m+1} y^{2n+1} $ is conjugate to $uy^{2n}$ in $Map(K)$. \end{lemma}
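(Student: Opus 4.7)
The plan is to use Stukow's presentation of $Map(K)$ — generators $y, t_\alpha$ with the single relation $t_\alpha y t_\alpha = y$ — to reduce both conjugacy claims to a single commutation identity between powers of $t_\alpha$ and powers of $y$, and then exhibit the conjugating element as an explicit power of $t_\alpha$ in each case.

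First, I would extract from the defining relation the rewriting rule $y t_\alpha = t_\alpha^{-1} y$, and then verify by induction (or by a direct calculation using that $y^2$ commutes with $t_\alpha$) the general identity
\begin{equation*}
y^j \, t_\alpha^k \;=\; t_\alpha^{(-1)^j k} \, y^j \qquad \text{for all } j,k \in \Z.
\end{equation*}
In particular, $t_\alpha^k$ commutes with any even power of $y$, and is inverted by any odd power of $y$. This single identity will drive everything.

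For the first statement, I would conjugate by $t_\alpha^{-m}$:
\begin{equation*}
t_\alpha^{-m}\bigl(t_\alpha^{2m} y^{2n+1}\bigr) t_\alpha^{m} \;=\; t_\alpha^{m} \, y^{2n+1} \, t_\alpha^{m} \;=\; t_\alpha^{m}\cdot t_\alpha^{-m} \, y^{2n+1} \;=\; y^{2n+1},
\end{equation*}
where in the middle step I pushed $t_\alpha^{m}$ past the odd power $y^{2n+1}$, turning it into $t_\alpha^{-m}$. For the second statement, I would first rewrite the target using the definition of the crosscap transposition, namely $u y^{2n} = (t_\alpha^{-1} y) y^{2n} = t_\alpha^{-1} y^{2n+1}$, and then conjugate by $t_\alpha^{-(m+1)}$:
\begin{equation*}
t_\alpha^{-(m+1)}\bigl(t_\alpha^{2m+1} y^{2n+1}\bigr) t_\alpha^{m+1} \;=\; t_\alpha^{m} \, y^{2n+1} \, t_\alpha^{m+1} \;=\; t_\alpha^{m} \cdot t_\alpha^{-(m+1)} y^{2n+1} \;=\; t_\alpha^{-1} y^{2n+1} \;=\; u y^{2n}.
\end{equation*}

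Since everything reduces to a purely algebraic manipulation inside $Map(K)$, there is no real geometric obstacle; the only step to be careful about is the parity bookkeeping — that conjugation by $t_\alpha^k$ shifts the $t_\alpha$-exponent of $t_\alpha^{a} y^{b}$ by $+2k$ when $b$ is odd (because $y^b t_\alpha^k = t_\alpha^{-k} y^b$) and leaves it unchanged when $b$ is even. This is what forces the split of the claim into the two parity cases in the statement.
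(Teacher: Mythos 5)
Your proof is correct and follows essentially the same route as the paper: conjugation by $t_\alpha^{-m}$ (respectively $t_\alpha^{-(m+1)}$) combined with the relation $t_\alpha y t_\alpha = y$. The only cosmetic difference is that you derive the commutation of $t_\alpha$ with even powers of $y$ directly from the presentation via the identity $y^j t_\alpha^k = t_\alpha^{(-1)^j k} y^j$, whereas the paper invokes the geometric facts $y^2 = t_\partial$ and that $t_\alpha$ commutes with the boundary twist $t_\partial$; both are valid and the conjugating elements are identical.
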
 
\begin{proof} For any $m \in \Z$, using the relation $t_\a  y t_\a = y$, we get $$t^{-m}_\a (t_\a^{2m} y) t^{m}_\a = t^{m}_\a  y t^{m}_\a = t^{m-1}_\a  y t^{m-1}_\a = \cdots =  t_\a  y t_\a = y .$$ Similarly, for any $m \in \Z$, we have $$ t_a^{-(m+1)} (t_\a^{2m+1}y) t_\a^{m+1} = t_\a^m y t_\a^{m+1} =\cdots= y t_\a=u. $$ Next, we use the facts that $y^2 = t_\partial$ (the boundary Dehn twist), and $t_\a$ commutes with $t_\partial$, to finish the proof. For any $m, n \in \Z$, we have $$ t^{-m}_\a (t_\a^{2m} y^{2n+1}) t^{m}_\a =  t^{-m}_\a (t_\a^{2m} y t_\partial^{n}) t^{m}_\a =  y t_\partial^{n} = y^{2n+1}.$$ Similarly, $$ t^{-(m+1)}_\a (t_\a^{2m+1} y^{2n+1}) t^{m+1}_\a =  t^{-(m+1)}_\a (t_\a^{2m} y t_\partial^{n}) t^{m+1}_\a =  u t_\partial^{n} = u y^{2n}.$$ \end{proof} 

\begin{corollary}\label{cor: conj} 
For any $m \in \Z$,  we have  
\begin{itemize} 
\item $OB(K, t_\a^{2m} y )=OB(K, y)= P^2 \times S^1 $, and 
\item $OB(K, t_\a^{2m+1} y )=OB(K, u)= S^2 \widetilde{\times} S^1$.  
\end{itemize}
\end{corollary}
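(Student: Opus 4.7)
The plan is to deduce the corollary directly from Lemma~\ref{lem: par} by specializing to $n=0$, combined with the general principle that conjugate monodromies yield homeomorphic open-book total spaces.

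First I would recall the standard fact that for any surface $N$ and any $\phi, \psi \in Map(N)$, the open books $OB(N, \phi)$ and $OB(N, \psi \phi \psi^{-1})$ are canonically homeomorphic. Indeed, on the level of mapping tori one has an explicit homeomorphism $N_\phi \to N_{\psi \phi \psi^{-1}}$ induced by $(x,t) \mapsto (\psi(x), t)$, which fixes $\partial N \times \{t\}$ up to isotopy after an isotopy of $\psi$ relative to $\partial N$; this descends to a homeomorphism of the total spaces obtained by Dehn filling along the bindings. (Since $\psi$ fixes the boundary pointwise, the filling is compatible.) This is the key principle that turns Lemma~\ref{lem: par} into a statement about open books.

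Next I would apply Lemma~\ref{lem: par} with $n=0$. The first half of the lemma gives that $t_\a^{2m} y = t_\a^{2m} y^{2\cdot 0 + 1}$ is conjugate to $y^{2\cdot 0 + 1} = y$ in $Map(K)$, so $OB(K, t_\a^{2m} y) \cong OB(K, y)$. Similarly, the second half of the lemma gives that $t_\a^{2m+1} y = t_\a^{2m+1} y^{2\cdot 0 + 1}$ is conjugate to $u y^{2\cdot 0} = u$, so $OB(K, t_\a^{2m+1} y) \cong OB(K, u)$.

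Finally, I would invoke the two identifications already established in the paper: Theorem~\ref{thm: openb} gives $OB(K, y) = P^2 \times S^1$, while Proposition~\ref{prop: cross} gives $OB(K, u) = S^2 \widetilde{\times} S^1$. Putting the three steps together yields the two desired equalities for every $m \in \Z$. There is essentially no obstacle here: the only nontrivial input, that conjugation in $Map(K)$ preserves the homeomorphism type of the open book, is standard, and all the heavy lifting—the algebraic conjugacy computation and the identification of the total spaces for the monodromies $y$ and $u$—has already been carried out in Lemma~\ref{lem: par}, Theorem~\ref{thm: openb}, and Proposition~\ref{prop: cross}.
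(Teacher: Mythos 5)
Your proposal is correct and follows exactly the paper's own route: the paper likewise cites the general fact that conjugate monodromies give homeomorphic open-book total spaces and then combines Lemma~\ref{lem: par} (specialized to $n=0$) with Theorem~\ref{thm: openb} and Proposition~\ref{prop: cross}. Your added justification of the conjugation principle is a welcome elaboration but does not change the argument.
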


\begin{proof} The total spaces of two open books with fixed page are homeomorphic, provided that the monodromies of these open books are conjugate in the mapping class group of the page. Therefore, Corollary~\ref{cor: conj} follows by combining Theorem~\ref{thm: openb}, Proposition~\ref{prop: cross}, and Lemma~\ref{lem: par}.
\end{proof} 

The next result immediately follows from Corollary~\ref{cor: conj}. 

\begin{corollary}  The product $P^2 \times S^1$ admits infinitely many isomorphic open books with page $K$ whose monodromies $\{t_\a^{2m} y \;| \; m \in \Z\}$ are mutually nonisotopic in $Map(K)$.  Similarly,  $S^2 \widetilde{\times} S^1$  admits infinitely many isomorphic open books with page $K$ whose monodromies $\{t_\a^{2m+1} y \;| \; m \in \Z\}$ are mutually nonisotopic in $Map(K)$. 
\end{corollary}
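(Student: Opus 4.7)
The plan is to assemble the two ingredients that the paper has already laid out: Corollary~\ref{cor: conj} to supply the homeomorphism type of the total spaces, and Stukow's normal form for $Map(K)$ to distinguish the monodromies.

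First, I would fix $m \in \Z$ and consider the family of open books $\{OB(K, t_\a^{2m} y)\}_{m \in \Z}$. By the first bullet of Corollary~\ref{cor: conj}, every total space in this family is homeomorphic to $P^2 \times S^1$. Hence, as abstract open books with page $K$ and the same total space, they are isomorphic in the sense required. Analogously, the second bullet of Corollary~\ref{cor: conj} realizes $\{OB(K, t_\a^{2m+1} y)\}_{m \in \Z}$ as a family of open books on $S^2 \widetilde{\times} S^1$.

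Next, I would invoke Stukow's presentation of $Map(K)$ already recalled in the excerpt: the generators are $y$ and $t_\a$ with the single relation $t_\a y t_\a = y$, and crucially every element of $Map(K)$ is \emph{uniquely} expressible in the normal form $t_\a^m y^n$. Applying this normal form directly, if $t_\a^{2m_1} y$ and $t_\a^{2m_2} y$ were isotopic in $Map(K)$, then uniqueness forces $2m_1 = 2m_2$ and hence $m_1 = m_2$. The same argument handles the odd case $t_\a^{2m+1} y$. Therefore each of the two families consists of mutually nonisotopic mapping classes, which completes the corollary.

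The only nontrivial input is the uniqueness half of Stukow's normal form for $Map(K)$, but since this is quoted verbatim earlier in the paper, there is no genuine obstacle; the corollary is essentially a bookkeeping consequence of Corollary~\ref{cor: conj} combined with that normal form. I would therefore write the proof as two or three lines, one sentence for each family.
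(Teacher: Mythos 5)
Your proposal is correct and matches the paper's route exactly: the paper states that the corollary ``immediately follows'' from Corollary~\ref{cor: conj}, with the mutual nonisotopy implicitly resting on the uniqueness of Stukow's normal form $t_\a^m y^n$ quoted just before, which is precisely the bookkeeping you spell out.
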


Let $\widehat{K}$ denote the (closed) Klein bottle and  $ \widehat{K} \widetilde{\times} S^1$ the twisted Klein bottle bundle over $S^1$ with monodromy $t_\a$. Recall that $t_{\partial}$ denotes the boundary Dehn twist in $Map(K)$.

\begin{proposition} For any $n \in \Z$, with $|n| \geq 1$, the $3$-manifold $OB(K, t^{\pm n}_\partial)$ is Seifert fibered over $\widehat{K}$. Moreover, we have  $OB(K, t^{\pm 1}_\partial)=\widehat{K} \widetilde{\times} S^1$. 
\end{proposition}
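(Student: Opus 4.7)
The plan is to construct a Seifert fibration on $OB(K, t_\partial^n)$ whose base is $\widehat{K}$ by exploiting the fact that $t_\partial$ is supported in a collar of $\partial K$. First, I would fix a collar $N \cong S^1 \times [0,1]$ of $\partial K = S^1 \times \{0\}$ together with a standard model in which $t_\partial^n$ acts on $N$ by the rotation $(e^{i\theta}, s) \mapsto (e^{i(\theta + 2\pi n f(s))}, s)$, with $f \colon [0,1] \to [0,1]$ equal to $1$ at $s=0$ and identically $0$ near $s=1$. Outside $N$ the monodromy is the identity, so the mapping torus $K_{t_\partial^n}$ decomposes as $(K \setminus N) \times S^1$ glued to the mapping torus of the rotational Dehn twist on $N$. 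Each piece carries a natural $S^1$-fibration (the product $S^1$-factor and the $S^1$-symmetry of the rotational model, respectively), and they agree along their common boundary; together they define a Seifert fibration on $K_{t_\partial^n}$.

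The next step is to extend this Seifert fibration across the solid torus glued at the binding to form $OB(K, t_\partial^n)$. On the boundary torus $\partial K \times S^1$, the $S^1$-fiber of the Seifert fibration is a simple closed curve whose slope is determined by $n$, and its intersection with the meridian of the glued solid torus (namely the time circle $\{\mathrm{pt}\} \times S^1$) fixes the local Seifert data at the binding fiber. Collapsing every fiber to a point produces a base space that coincides with $K$ outside the binding region and is capped off by a single disk coming from the glued solid torus, yielding $\widehat{K}$ in total. This proves that $OB(K, t_\partial^n)$ is Seifert fibered over $\widehat{K}$ for every $n$ with $|n| \geq 1$.

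For the identification $OB(K, t_\partial^{\pm 1}) = \widehat{K}\, \widetilde{\times}\, S^1$, the plan is to regard $\widehat{K}\, \widetilde{\times}\, S^1$ itself as an $S^1$-bundle over $\widehat{K}$ and match the two bundles. Since $\widehat{K}$ fibers over $S^1$ with circle fiber, applying this fibration fiberwise in the $\widehat{K}$-bundle $\widehat{K}\, \widetilde{\times}\, S^1$ over $S^1$ produces a circle fibration whose base is $S^1 \,\widetilde{\times}\, S^1 = \widehat{K}$. For $n = \pm 1$ the slope computation at the binding is expected to produce a regular fiber (no exceptional fiber), making the Seifert fibration of $OB(K, t_\partial^{\pm 1})$ an honest $S^1$-bundle over $\widehat{K}$; one then identifies this bundle with $\widehat{K}\, \widetilde{\times}\, S^1$ by comparing their Euler invariants in $H^2(\widehat{K}; \Z^{\mathrm{tw}}) \cong \Z$ and invoking the classification of $S^1$-bundles over $\widehat{K}$.

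The main obstacle will be to carefully track the slope of the generic $S^1$-fiber on the boundary torus $\partial K \times S^1$ as it passes through the rotational Dehn twist collar, and from this slope to read off the Seifert data at the binding: in particular, one must verify that at $n=\pm 1$ no exceptional fiber arises and that the resulting Euler invariant matches that of $\widehat{K}\, \widetilde{\times}\, S^1$. A subsidiary technical point is that Euler invariants of $S^1$-bundles over the nonorientable base $\widehat{K}$ must be formulated with twisted integer coefficients, and this twisting has to be handled consistently when comparing the two bundle descriptions.
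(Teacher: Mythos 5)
Your overall strategy (Seifert fiber the binding complement, extend over the binding solid torus, identify the base as $\widehat{K}$, and check that $n=\pm 1$ produces a regular fiber) is in the right spirit, but the central gluing step is wrong as stated. On the torus where the collar $N$ meets $K\setminus N$, the product fibration of $(K\setminus N)\times S^1$ restricts to the \emph{time} circles $\{\mathrm{pt}\}\times S^1$, whereas the orbits of the $\theta$-rotation symmetry of your collar model are the core-parallel circles of the annulus; these two families are transverse, not equal, so the two fibrations do not agree along the common boundary. The problem is not just a poor choice of model: the collar mapping torus is a copy of $T^2\times I$, and a Seifert fibration of $T^2\times I$ without exceptional fibers is a product with fibers of a single fixed slope, so no such piece can interpolate between the time-slope on the outer boundary torus and the (twisted) slope needed at the binding. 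Equally, the naive time-circle foliation fails inside the collar, since at levels $s$ with $nf(s)\notin\Z$ the orbits of $\partial/\partial t$ do not close up. So the Seifert fibration of $K_{t_\partial^n}$ you describe does not exist in the form claimed.

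The repair is to notice that $t_\partial^{\pm n}$, being supported in a collar, is isotopic to the identity through homeomorphisms that are allowed to move $\partial K$; hence the binding complement is globally homeomorphic to $K\times S^1$, an honest circle bundle over $K$, and \emph{all} of the monodromy information is carried by the slope with which the binding solid torus is reglued. Concretely, $OB(K, t_\partial^{\pm n})$ is obtained from $\widehat{K}\times S^1$ by $\pm n$ surgery on a circle fiber, which is exactly the paper's (one-line) argument, citing the orientable analogue in \cite{oz}: the core of the reglued solid torus is an exceptional fiber for $|n|\ge 2$ and a regular fiber for $|n|=1$, giving a Seifert fibration over $\widehat{K}$ in all cases and a genuine circle bundle when $|n|=1$. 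Finally, note that your two concluding steps --- that $n=\pm 1$ yields no exceptional fiber, and that the resulting circle bundle is $\widehat{K}\,\widetilde{\times}\,S^1$ via a twisted Euler class computation --- are announced as ``expected'' rather than carried out; the second in particular needs an actual argument, since there are several circle bundles over the Klein bottle to distinguish.
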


\begin{proof} The $\pm n$ surgery on a circle fiber of the bundle $\widehat{K} \times S^1$, yields an open book for the resulting $3$-manifold with page $K$ and monodromy $ t^{\pm n}_\partial$, similar to the orientable case discussed in \cite{oz}.    
For $|n| \geq 2$, the resulting nonorientable $3$-manifold admits a Seifert fibration over $\widehat{K}$ with one singular fiber, while for $|n|=1$, it is a circle bundle over $S^1$, which is homeomorphic to $ \widehat{K} \widetilde{\times} S^1$. 
\end{proof}

\section{Stable equivalence classes of nonorientable open books} 

In the following,  we assume that the $3$-manifolds are closed and connected but not necessarily orientable. 

%do not assume the $3$-manifolds, the pages of the open books and the Heegaard splittings are orientable. 

%acts are well known in the orientable case and have appeared frequently in the literature.  Here, we just formulate them in the setting where we drop the orientability assumption on the  for the purposes of this paper.  stabilization of an open book does not change the homeomorphism type of the total space of the open book at hand, since it is obtained by a connected sum with $S^3$. 

A stabilization of an open book is defined as a finite sequence of Hopf plumbings and two 
open books  are called stably equivalent if they have isotopic stabilizations.  Every open book on a $3$-manifold induces a Heegaard splitting, where the Heegaard surface is the union of two distinct pages. Moreover,  if an open book is stabilized, then the associated Heegaard splitting is also stabilized. Reidemeister and Singer showed that any two Heegaard splittings of a $3$-manifold admit isotopic stabilizations.  Consequently, it is natural to ask whether any two open books for a given $3$-manifold are stably equivalent.

Using the celebrated Giroux correspondence between contact structures and open books, as an essential ingredient, Giroux and Goodman \cite{gg} gave a complete solution to this question in the orientable case:  ``Two open books for an oriented $3$-manifold admit isotopic stabilizations if and only if their associated oriented plane fields are homologous."

%As a corollary they proved Harer's conjecture: ``Any fibered link in $S^3$ can be obtained from the unknot by finitely many plumbings and deplumbings of Hopf links."

Although there is no contact structure on a {\em nonorientable} $3$-manifold, one can still consider an equivalence relation on the set of all open books for a nonorientable $3$-manifold, induced by stabilizations. 

\begin{corollary}  Each of $P^2 \times S^1$ and $S^2 \widetilde{\times} S^1$ admits a genus one open book and a genus two open book,  which are not stably equivalent. \end{corollary}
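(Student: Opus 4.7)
The plan is to classify open books by whether the monodromy lies in the subgroup $\mathrm{TW}(N) \leq Map(N)$ generated by Dehn twists along two-sided simple closed curves on the page $N$, and to verify that stabilization cannot move an open book between the two classes. Lickorish's theorem recalled in Section~\ref{sec: yhomeo}, stating that a $Y$-homeomorphism never lies in $\mathrm{TW}$, will be the key obstruction.

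First I would check that $\mathrm{TW}(N)$ is normal in $Map(N)$, since conjugating $t_c$ by any $\psi \in Map(N)$ yields $t_{\psi(c)}$, and two-sidedness of a curve is a topological invariant. Then I would observe that a Hopf plumbing along a two-sided arc produces an enlarged nonorientable page $N'$ containing $N$, with new monodromy $\widehat{\phi} \circ t_c^{\pm 1}$, where $c$ is the two-sided core of the new band and $\widehat{\phi}$ extends $\phi$ by the identity; extensions of Dehn twists along two-sided curves are again such Dehn twists, so $\mathrm{TW}$-membership is preserved under stabilization. Dually, the extension of the $Y$-homeomorphism $y \in Map(K)$ to any $N' \supset K$ is, by the very definition recalled in Section~\ref{sec: yhomeo}, a $Y$-homeomorphism of $N'$, and hence remains outside $\mathrm{TW}(N')$ by Lickorish's theorem (valid since $N'$ has nonorientable genus at least two).

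For $P^2 \times S^1$, the genus one open book of Ghanwat-Pandit-Selvakumar has monodromy in $\mathrm{TW}$ by construction, whereas Klassen's genus two open book has monodromy $y \notin \mathrm{TW}(K)$ by Theorem~\ref{thm: main} combined with Lickorish. For $S^2 \widetilde{\times} S^1$, the genus one $OB(\M, \mathrm{id})$ from Lemma~\ref{lem: mob} has identity monodromy, vacuously in $\mathrm{TW}$, while $OB(K, u)$ from Proposition~\ref{prop: cross} has monodromy $u = t_\a^{-1} y \notin \mathrm{TW}(K)$ (since $t_\a^{-1} \in \mathrm{TW}(K)$ but $y \notin \mathrm{TW}(K)$). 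Stable equivalence would force the stabilized monodromies to be conjugate in a common $Map(N')$, and by normality conjugation preserves $\mathrm{TW}$-membership; hence no pair listed can be stably equivalent.

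The principal technical point is guaranteeing that every stabilization truly adds a Dehn twist along a two-sided curve and that Lickorish's obstruction survives arbitrarily many enlargements of the page. Both reduce to restricting Hopf plumbings to two-sided arcs (so that the plumbing rectangle embeds cleanly and its core is two-sided) together with the observation that each stabilized page still contains $K$ and hence has nonorientable genus at least two, allowing Lickorish's theorem to be invoked uniformly on every stabilized surface.
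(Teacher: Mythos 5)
Your proposal is correct and follows essentially the same route as the paper: both use Lickorish's theorem that a $Y$-homeomorphism (and hence the crosscap transposition, up to a two-sided twist) is not a product of Dehn twists along two-sided curves, together with the observation that stabilization only composes the (suitably extended) monodromy with such twists. Your extra remarks on the normality of the twist subgroup and on the extension of $y$ remaining a $Y$-homeomorphism on the enlarged page just make explicit what the paper leaves implicit.
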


%There are at least two stable equivalence classes of open books on   
\begin{proof} According to \cite{gps},  $P^2 \times S^1$ admits a nonorientable genus one open book whose monodromy is a product of Dehn twists along two-sided curves. On the other hand, as we showed in Theorem~\ref{thm: main},  the monodromy of Klassen's genus two open book for $P^2 \times S^1$ is the $Y$-homeomorphism of Lickorish. In addition, the monodromy of any stabilization of Klassen's open book  will be the composition of the $Y$-homeomorphism with a product of  Dehn twists on two-sided curves. But since a $Y$-homeomorphism cannot be expressed as a product of Dehn twists, we conclude that Klassen's open book for  $P^2 \times S^1$ cannot be in the same stable equivalence class with any open book  whose monodromy is a product of Dehn twists, such as those described in \cite{gps}. Therefore, the aforementioned genus one open book and Klassen's genus two open book are not stably equivalent. 

As we discussed in Lemma~\ref{lem: mob}, there is a genus one open book for $S^2 \widetilde{\times} S^1$ with page the M\"{o}bius band and monodromy the identity map.  On the other hand,  by Proposition~\ref{prop: cross},  $S^2 \widetilde{\times} S^1$ also admits a genus two open book whose monodromy is the crosscap  transposition. Therefore, these two open books cannot be stably equivalent, by an argument similar to that given in the first paragraph of the proof. 
\end{proof}

\begin{corollary} Both Klassen's genus two open book for $P^2 \times S^1$ and 
the genus two open book for  $S^2 \widetilde{\times} S^1$ with monodromy the crosscap transposition cannot  be destabilized. \end{corollary}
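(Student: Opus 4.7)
The plan is to argue by contradiction in both cases, exploiting the Lickorish obstruction that a $Y$-homeomorphism is not a product of Dehn twists along two-sided curves. Suppose one of the two open books under consideration, with page $K$ and monodromy $\phi \in \{y,\, u\}$, admits a destabilization. Then there is an open book $(N',\phi')$ and a Hopf band $H\subset K$ such that $K = N' \cup H$ is a Murasugi/Hopf plumbing decomposition and $\phi = \phi' \circ t_\gamma^{\pm 1}$ in $\mathrm{Map}(K)$, where $\gamma$ is the core of $H$ and $\phi'$ is understood as extended by the identity over $H$.

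First I would pin down the topology of $N'$ by an Euler characteristic count: $\chi(K)=-1$ and a Hopf plumbing drops $\chi$ by one, so $\chi(N')=0$. Together with the fact that $N'$ has nonempty boundary, this forces $N'$ to be either an annulus $\A$ or a M\"obius band $\M$. In either case, the mapping class group $\mathrm{Map}(N',\partial N')$ is the infinite cyclic group generated by the Dehn twist about a two-sided simple closed curve: the core of $\A$ in the first case, and a boundary-parallel curve in $\M$ (which is two-sided in $\M$) in the second. The core $\gamma$ of a plumbed Hopf band is likewise a two-sided curve in $K$, since a Hopf band is an annulus. Consequently, after reinterpreting $\phi'$ as a mapping class of $K$ supported in $N'$, the factorization $\phi = \phi' \circ t_\gamma^{\pm 1}$ exhibits $\phi$ as a product of Dehn twists along two-sided curves in $K$.

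Next I would derive the contradiction. If $\phi = y$, this directly contradicts Lickorish's theorem \cite{l}, already recalled at the end of Section~\ref{sec: yhomeo}, which says $y$ cannot be written as a product of Dehn twists along two-sided curves. If $\phi = u = t_\a^{-1} y$, then writing $u$ as such a product would give $y = t_\a\, u$ as a product of Dehn twists along two-sided curves, again contradicting Lickorish. Either possibility is ruled out, so no destabilization exists.

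The main obstacle I anticipate is the verification at the mapping-class-group level, namely that after destabilization $\phi'$ is really a product of Dehn twists along two-sided curves of $K$. The annulus case is immediate, but for the M\"obius band one must be a bit careful: one needs that $\mathrm{Map}(\M,\partial \M)$ is generated by the boundary-parallel Dehn twist (and not, say, by some orientation-reversing class like a crosscap slide that could sneak in), and that when $\M$ is embedded in $K$ as one side of a plumbing, its boundary-parallel curve is still two-sided in the ambient $K$. Once these small subtleties are handled, the rest of the argument is essentially the same reasoning already used in the proof of the previous corollary, applied in reverse (to destabilizations rather than stabilizations).
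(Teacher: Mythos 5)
Your argument is correct, but it takes a genuinely more detailed route than the paper's, which disposes of the corollary in one line: a destabilization would exhibit the genus two open book as a stabilization of a genus one open book, and a $Y$-homeomorphism (hence a crosscap transposition) only exists on a nonorientable surface of genus at least two. You instead identify the deplumbed page by an Euler characteristic count ($\chi(K)=-1$, so $\chi(N')=0$ and $N'$ is an annulus or a M\"obius band), check that in either case the induced mapping classes of $K$ lie in the subgroup generated by Dehn twists about two-sided curves, and then invoke Lickorish's theorem that $y$ (and hence $u=t_\a^{-1}y$) does not lie in that subgroup --- in effect re-running the paper's preceding corollary in reverse. This buys you something: it makes explicit the step the paper's one-liner leaves implicit, namely why a destabilized monodromy would be forced into the twist subgroup, and it also rules out the annulus (orientable) deplumbing directly rather than only the genus one case. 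One small correction: $Map(\M,\partial\M)$ is trivial (Epstein), not infinite cyclic; the Dehn twist about the boundary-parallel two-sided curve of the M\"obius band is isotopic to the identity rel boundary. This slip only strengthens your argument, since in the M\"obius band case $\phi'$ must be the identity and $\phi$ would be the single Dehn twist $t_\gamma^{\pm 1}$, which is certainly not $y$ or $u$.
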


\begin{proof} We observe that the genus two open books mentioned in the theorem cannot be obtained by stabilizing a genus one open book, since a $Y$-homeomorphism and hence a crosscap transposition only exist on a nonorientable surface of genus at least two.   \end{proof} 

\begin{remark}  We would like to point out that the ``only if" part of the Giroux-Goodman theorem holds true for nonorientable $3$-manifolds as well. Note that two oriented plane fields on an orientable $3$-manifold are homologous if and only if they are homotopic outside of a ball. Similar to the orientable case, there is a plane field associated to a nonorientable open book obtained by extending the tangent planes to the pages over the neighborhood of the binding, which is well-defined up to homotopy.  Moreover, Hopf plumbing yields an open book that coincides with the original one
in the complement of a ball and thus the associated plane field remains the same outside of a ball. We conclude that the associated plane fields of stably equivalent (orientable or nonorientable) open books are homotopic outside of a ball.  The proof of the converse direction of the Giroux-Goodman theorem, however,  relies heavily on contact geometry using in particular the isotopy classes of contact structures adapted to open books, rather than just the homotopy classes of the associated plane fields. So, it is not immediately clear how to modify the proof for nonorientable $3$-manifolds. \end{remark} 
 
% Note that the open book with page $K$ and monodromy $t^{\pm 1}_\a$ can be obtained by a stabilization of the open book with page $\M$ and identity monodromy. In particular, 

%\begin{proposition} \label{prop: twist}  \end{proposition}

%\begin{proof} By performing a $\pm 1$ surgery along a circle fiber of the bundle $\widehat{K} \times  S^1 $, we get a nonorientable circle bundle over $\widehat{K}$ which is homeomorphic to the surface bundle $\widehat{K}\widetilde{\times} S^1 $. On the other hand, the $\pm 1$ surgery on a fiber yields an open book decomposition of the resulting $3$-manifold with page $K$ and monodromy $ t^{\pm 1}_\partial$, similar to the orientable case discussed in \cite{eo1}.    \end{proof}

%\noindent {\bf {Acknowledgement}}:

%clearpage

\end{document}